\providecommand{\U}[1]{\protect\rule{.1in}{.1in}}
\newtheorem{theorem}{Theorem}
\newtheorem{acknowledgement}[theorem]{Acknowledgement}
\newtheorem{lemma}[theorem]{Lemma}
\newtheorem{proposition}[theorem]{Proposition}
\newtheorem{remark}[theorem]{Remark}
\newenvironment{proof}[1][Proof]{\noindent\textbf{#1.} }{\ \rule{0.5em}{0.5em}}
\begin{document}

\title{Mehler-Heine type formulas for Charlier and Meixner polynomials.}
\author{Diego Dominici \thanks{e-mail: dominicd@newpaltz.edu}\\Department of Mathematics \\State University of New York at New Paltz \\1 Hawk Dr.\\New Paltz, NY 12561-2443 \\USA}
\maketitle

\begin{abstract}
We derive Mehler--Heine type asymptotic formulas for Charlier and Meixner
polynomials, and also for their associated families. These formulas provide
good approximations for the polynomials in the neighborhood of $x=0,$ and
determine the asymptotic limit of their zeros as the degree $n$ goes to infinity.

\end{abstract}

\strut Keywords: Mehler-Heine formulas, discrete orthogonal polynomials,
associated polynomials, Stieltjes transforms

\strut MSC-class: 41A30 (Primary), 33A65, 33A15, 44A15 (Secondary)

\section{Introduction}

\bigskip\ Mehler--Heine type asymptotic formulas were introduced by Heinrich
Eduard Heine (1821-1881) \cite{MR0204726} and Gustav Ferdinand Mehler
(1835-1895) \cite{MR676041} (see Watson's book \cite[5.71]{MR1349110} for some
historical remarks). They describe the asymptotic behavior of a family of
orthogonal polynomials $P_{n}(x)$ as the degree $n$ tends to infinity, near
one edge of the support of the measure. They have the general form%
\[
\lim_{n\rightarrow\infty}\ f_{n}\left(  x\right)  P_{n}\left(  A+u_{n}%
x\right)  =g(x),\quad x\in S,
\]
where $S$ is a domain in the complex plane, $A$ is a constant, $u_{n}$ is a
given sequence, $g(x)$ is analytic in $S$ and the functions $f_{n}\left(
x\right)  $ are analytic and don't have any zeros in $S,$ for sufficiently
large $n$. The convergence is uniform on compact subsets of $S.$ From
Hurwitz's theorem \cite[4.10e]{MR1008928}, we conclude that for a fixed $k$%
\begin{equation}
x_{n,k}\sim\frac{\zeta_{k}-A}{u_{n}},\quad n\rightarrow\infty\label{zeros}%
\end{equation}
where
\[
x_{n,1}<x_{n,2}<\cdots<x_{n,n}%
\]
are the zeros of $P_{n}(x)$ and $\zeta_{1}<\zeta_{2}<\cdots$ are the zeros of
$g(x).$

Examples of Mehler-Heine formulas include the \emph{Jacobi polynomials}
$P_{n}^{\left(  \alpha,\beta\right)  }(x),$ defined by
\begin{equation}
P_{n}^{\left(  \alpha,\beta\right)  }\left(  x\right)  =\frac{\left(
\alpha+1\right)  _{n}}{n!}\ _{2}F_{1}\left(
\begin{array}
[c]{c}%
-n,n+\alpha+\beta+1\\
\alpha+1
\end{array}
;\frac{1-x}{2}\right)  , \label{Jacobi def}%
\end{equation}
where $\alpha,\beta>-1,$
\[
\ _{p}F_{q}\left(
\begin{array}
[c]{c}%
a_{1},\ldots,a_{p}\\
b_{1},\ldots,b_{q}%
\end{array}
;z\right)  =%
{\displaystyle\sum\limits_{k=0}^{\infty}}
\frac{\left(  a_{1}\right)  _{k}\cdots\left(  a_{p}\right)  _{k}}{\left(
b_{1}\right)  _{k}\cdots\left(  b_{q}\right)  _{k}}\frac{z^{k}}{k!}%
\]
denotes the \emph{Generalized Hypergeometric Function} and $\left(  u\right)
_{k}$ is the \emph{Pochhammer symbol} (or rising factorial) \cite{MR1688958},
\[
\left(  u\right)  _{k}=u\left(  u+1\right)  \cdots\left(  u+k-1\right)  .
\]
For the Jacobi polynomials, we have \cite{MR0372517}%
\begin{equation}
\lim_{n\rightarrow\infty}\ n^{-\alpha}P_{n}^{\left(  \alpha,\beta\right)
}\left(  1-\frac{x^{2}}{2n^{2}}\right)  =\left(  \frac{x}{2}\right)
^{-\alpha}J_{\alpha}\left(  x\right)  , \label{Jacobi MH}%
\end{equation}
where $J_{\alpha}\left(  x\right)  $ is the \emph{Bessel function} of the
first kind \cite{MR1688958}%
\[
J_{\nu}\left(  x\right)  =\left(  \frac{x}{2}\right)  ^{\nu}\frac{1}%
{\Gamma\left(  \nu+1\right)  }\ _{0}F_{1}\left(
\begin{array}
[c]{c}%
-\\
\nu+1
\end{array}
;-\frac{x^{2}}{4}\right)  ,
\]
and $\Gamma\left(  z\right)  $ is the \emph{Gamma function}. The case
$\alpha=\beta=0$ (Legendre polynomials) was the one originally considered by
Mehler and Heine. Extensions of this result for some types of generalized
Jacobi polynomials were studied in \cite{MR3182023}.

The \emph{Laguerre polynomials} $L_{n}^{\left(  \alpha\right)  }\left(
x\right)  ,$ defined by \cite{MR2656096}
\[
L_{n}^{\left(  \alpha\right)  }\left(  x\right)  =\frac{\left(  \alpha
+1\right)  _{n}}{n!}\ _{1}F_{1}\left(
\begin{array}
[c]{c}%
-n\\
\alpha+1
\end{array}
;x\right)  ,\quad\alpha>-1,
\]
satisfy \cite{MR0372517}%
\[
\lim_{n\rightarrow\infty}\ n^{-\alpha}L_{n}^{\left(  \alpha\right)  }\left(
\frac{x^{2}}{4n}\right)  =\left(  \frac{x}{2}\right)  ^{-\alpha}J_{\alpha
}\left(  x\right)  .
\]
Mehler-Heine type formulas for some classes of \emph{multiple} (also called
\emph{polyorthogonal}) \cite{MR1808581} Jacobi and Laguerre polynomials were
considered in \cite{MR2158528}, \cite{MR3051164}, and \cite{MR834171}.

For the \emph{Hermite polynomials} $H_{n}\left(  x\right)  ,$ defined by
\cite{MR2656096}%
\[
H_{n}\left(  x\right)  =\left(  2x\right)  ^{n}\ _{2}F_{0}\left(
\begin{array}
[c]{c}%
-\frac{n}{2},-\frac{n-1}{2}\\
-
\end{array}
;-\frac{1}{x^{2}}\right)  ,
\]
we have two distinct cases:%
\[
\lim_{n\rightarrow\infty}\frac{\left(  -1\right)  ^{n}}{4^{n}n!}\sqrt{n}%
H_{2n}\left(  \frac{x}{2\sqrt{n}}\right)  =\left(  \frac{x}{2}\right)
^{\frac{1}{2}}J_{-\frac{1}{2}}\left(  x\right)  ,
\]
and%
\[
\lim_{n\rightarrow\infty}\frac{\left(  -1\right)  ^{n}}{4^{n}n!}%
H_{2n+1}\left(  \frac{x}{2\sqrt{n}}\right)  =\left(  2x\right)  ^{\frac{1}{2}%
}J_{\frac{1}{2}}\left(  x\right)  .
\]
The Laguerre polynomials and the Hermite polynomials are related by the
quadratic transformations \cite{MR2656096}
\begin{align*}
H_{2n}\left(  x\right)   &  =\left(  -1\right)  ^{n}n!2^{2n}L_{n}^{\left(
-\frac{1}{2}\right)  }\left(  x^{2}\right) \\
H_{2n+1}\left(  x\right)   &  =\left(  -1\right)  ^{n}n!2^{2n+1}%
xL_{n}^{\left(  \frac{1}{2}\right)  }\left(  x^{2}\right)  .
\end{align*}

All of the Mehler-Heine formulas above can be derived from the result
\cite{MR2656096}%
\[
\underset{\lambda\rightarrow\infty}{\lim}\ _{p}F_{q}\left(
\begin{array}
[c]{c}%
a_{1},\ldots,a_{p-1},\lambda a_{p}\\
b_{1},\ldots,b_{q}%
\end{array}
;\frac{x}{\lambda}\right)  =\ _{p-1}F_{q}\left(
\begin{array}
[c]{c}%
a_{1},\ldots,a_{p-1}\\
b_{1},\ldots,b_{q}%
\end{array}
;a_{p}x\right)  .
\]
In \cite{MR1184309}, A. Aptekarev generalized (\ref{Jacobi MH}) in the
following way:

\begin{theorem}
Let $q_{n}(x)$ be an orthonormal system of polynomials defined by%
\[
xq_{n}=b_{n}q_{n+1}+a_{n}q_{n}+b_{n-1}q_{n-1},
\]
with%
\begin{equation}
a_{n}\rightarrow0,\quad b_{n}\rightarrow\frac{1}{2}, \label{Nevai}%
\end{equation}
and suppose that%
\[
\frac{q_{n+1}\left(  1\right)  }{q_{n}\left(  1\right)  }=1+\frac{\alpha
+\frac{1}{2}}{n}+O\left(  n^{-1}\right)  ,\quad\alpha>-1.
\]
Then,%
\[
\lim_{n\rightarrow\infty}\frac{1}{n^{\alpha+\frac{1}{2}}}q_{n}\left(
1-\frac{x^{2}}{2n^{2}}\right)  =x^{-\alpha}J_{\alpha}\left(  x\right)  ,
\]
uniformly on compact subsets of the complex plane.
\end{theorem}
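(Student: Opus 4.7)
The plan is to combine (i) a growth estimate for $q_n(1)$ from the ratio hypothesis, (ii) a normal-families (Montel) compactness argument for the rescaled polynomials $f_n(y):=n^{-\alpha-1/2}q_n(1-y^2/(2n^2))$, and (iii) an identification of every subsequential limit as the unique entire, even solution analytic at the origin of a specific Bessel ODE.

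For (i), iterating the hypothesis $q_{n+1}(1)/q_n(1)=1+(\alpha+\tfrac{1}{2})/n+o(1/n)$ (reading the stated error $O(1/n)$ as $o(1/n)$, since otherwise the leading term could not be distinguished from the error) and taking logarithms yields $\log q_n(1)=(\alpha+\tfrac{1}{2})\log n+O(1)$, hence $q_n(1)\sim C\,n^{\alpha+1/2}$ for some $C>0$; in particular $f_n(0)\to C$. For (ii), each $f_n$ is an entire even polynomial in $y$, and rewriting the recurrence as $q_{n+1}(x)=b_n^{-1}[(x-a_n)q_n(x)-b_{n-1}q_{n-1}(x)]$ at $x=1-y^2/(2n^2)$ lets one propagate the bound at $y=0$ to a locally uniform bound on $\{f_n\}$ in $\mathbb{C}$ via a discrete Gronwall argument exploiting $\liminf b_n=1/2>0$. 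By Montel's theorem, every subsequence of $\{f_n\}$ has a further subsequence converging locally uniformly to some entire, even function $g$.

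For (iii), I would rewrite the recurrence in second-difference form
\[
b_n\bigl[q_{n+1}(x)-q_n(x)\bigr] - b_{n-1}\bigl[q_n(x)-q_{n-1}(x)\bigr] = (x-a_n-b_n-b_{n-1})\,q_n(x),
\]
substitute $x_n=1-y^2/(2n^2)$, and divide by $n^{-\alpha-1/2}$. One must first convert $q_{n\pm 1}(x_n)$ to $(n\pm1)^{\alpha+1/2} f_{n\pm1}(y)$ by Taylor-expanding around the natural argument $x_{n\pm1}=1-y^2/(2(n\pm1)^2)$; the shift $x_n-x_{n\pm1}=O(n^{-3})$ produces controlled first-derivative corrections on compacta. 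After collecting, the left-hand side approximates a discrete Laplacian $\tfrac{1}{4n^2}g''(y)$ together with a $\tfrac{1}{n^2 y}g'(y)$-type contribution coming from $[(n\pm1)/n]^{\alpha+1/2}=1\pm(\alpha+\tfrac{1}{2})/n+O(n^{-2})$, while the right-hand side yields $-\tfrac{y^2}{2n^2}g(y)$ from $x_n-1$ plus an $O(n^{-2})$ term from $1-a_n-b_n-b_{n-1}$ whose precise coefficient is extracted from the recurrence evaluated at $x=1$, i.e.\ $1=b_n r_n+a_n+b_{n-1}/r_{n-1}$ with $r_n=q_{n+1}(1)/q_n(1)$. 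Multiplying by $4n^2$ and passing to the limit gives
\[
g''(y)+\frac{2\alpha+1}{y}g'(y)+g(y)=0.
\]

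The main obstacle will be the precise algebraic bookkeeping in step (iii): the $1/n$ contributions in the scaled recurrence must cancel, and the surviving $1/n^2$ terms must assemble to produce exactly the coefficient $(2\alpha+1)/y$ of the first-derivative term, not a spurious constant. Once the ODE is established, its one-dimensional space of solutions analytic at $y=0$ is spanned by $y^{-\alpha}J_\alpha(y)$, and the normalization $g(0)=C$ forces $C=2^{-\alpha}/\Gamma(\alpha+1)$ and $g(y)=y^{-\alpha}J_\alpha(y)$. Since every subsequential limit thus coincides, the full sequence $\{f_n\}$ converges locally uniformly to $y^{-\alpha}J_\alpha(y)$ on $\mathbb{C}$, giving the claimed formula.
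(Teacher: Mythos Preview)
The paper does not prove this theorem. It is quoted there, with attribution to Aptekarev \cite{MR1184309}, purely as background for the classical Mehler--Heine circle of ideas; no argument is supplied. So there is nothing in the paper to compare your proposal against.

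On the proposal itself, the normal-families-plus-limiting-ODE strategy is a standard and reasonable route to results of this type, and your sketch of step~(iii) is broadly on the right track. The genuine gap is the normalization. Your ODE argument only pins down the subsequential limit $g$ up to a scalar multiple of $y^{-\alpha}J_\alpha(y)$; to finish you need $g(0)=C$ to equal exactly $2^{-\alpha}/\Gamma(\alpha+1)$. But nothing you have written forces that value of $C$. First, even reading the error term as $o(1/n)$, summing $\log\bigl(1+(\alpha+\tfrac12)/k+o(1/k)\bigr)$ gives $(\alpha+\tfrac12)\log n$ plus a tail $\sum_k o(1/k)$ that need not be bounded, so $q_n(1)/n^{\alpha+1/2}$ need not converge at all; you would need a summable remainder such as $O(n^{-1-\varepsilon})$ to obtain a limit $C$. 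Second, even granting convergence, the value of $C$ depends on the full sequences $(a_n),(b_n)$ and on the orthonormal normalization, not merely on the ratio asymptotics. For the orthonormal Jacobi polynomials with parameters $(\alpha,\beta)$ one finds $q_n(1)/n^{\alpha+1/2}\to 2^{(\alpha-\beta)/2}\cdot 2^{-\alpha}/\Gamma(\alpha+1)$, so the limit in the displayed conclusion is actually $2^{(\alpha-\beta)/2}\,x^{-\alpha}J_\alpha(x)$: the statement as reproduced here is already missing a constant, and your argument cannot manufacture the correct one without an additional structural input (e.g.\ an assumption on the rate at which $a_n\to0$ and $b_n\to\tfrac12$, or a direct appeal to orthonormality via a Christoffel--Darboux estimate). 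Your final sentence, in which ``$g(0)=C$ forces $C=2^{-\alpha}/\Gamma(\alpha+1)$,'' is circular: that equality is precisely what remains to be proved.
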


The condition (\ref{Nevai}), indicates that the polynomials $q_{n}\left(
x\right)  $ are orthogonal with respect to a measure $\mu\left(  x\right)  $
that is supported on the interval $\left[  -1,1\right]  $ and belongs to the
\emph{Nevai class} $\mathcal{M}$ \cite{MR519926}. Aptekarev's result was
extended in \cite{MR1835990}. Similar results for multiple orthogonal
polynomials were obtained in \cite{MR2591115} and \cite{MR2541225}

A somehow different type of example is provided by the \emph{Modified Lommel
Polynomials}, defined by \cite{MR0086897}%
\[
h_{n,\nu}(x)=\left(  \nu\right)  _{n}\left(  2x\right)  ^{n}\ _{2}F_{3}\left(
\begin{array}
[c]{c}%
-\frac{n}{2},\frac{1-n}{2}\\
\nu,-n,1-\nu-n
\end{array}
;-\frac{1}{x^{2}}\right)  ,\quad\nu>0.
\]
In this case,%
\[
\lim_{n\rightarrow\infty}\frac{\left(  2x\right)  ^{1-\nu-n}}{\Gamma\left(
n+\nu\right)  }h_{n,\nu}(x)=J_{\nu-1}\left(  x^{-1}\right)  ,\quad x\neq0,
\]
uniformly on compact subsets of $\mathbb{C}\backslash\{0\}.$ If we fix the
value of $x$ (say $x=\frac{1}{2}),$ we obtain a different family of orthogonal
polynomials in the variable $\nu$ \cite{MR0219769}%
\[
R_{n}\left(  z\right)  =h_{n,z}\left(  \frac{1}{2}\right)  .
\]
For these polynomials, we have%
\[
\lim_{n\rightarrow\infty}\frac{1}{\Gamma\left(  n+z\right)  }R_{n}\left(
z\right)  =J_{z-1}\left(  2\right)  .
\]

Generalizations of Mehler-Heine type formulas in the context of Riemannian
geometry were given in \cite{MR2131449}, \cite{MR2599266}, \cite{MR0425521},
and \cite{MR0420158}. Extensions to polynomials in several variables were
studied in \cite{MR1104827}.

Mehler-Heine type formulas have been extensively used in the theory of
\emph{Sobolev orthogonal polynomials} see (among many other articles)
\cite{MR2775140}, \cite{MR2727628}, \cite{MR1946880}, \cite{MR2451185},
\cite{MR2903969}, \cite{MR2639172}, and \cite{MR2134372}.

The work presented in this paper was motivated by a question that Professor
Juan Jos\'{e} Moreno-Balc\'{a}zar asked during our visit to the Universidad de
Almer\'{\i}a in 2010. He was wondering if it would be possible to have
Mehler-Heine type formulas for discrete orthogonal polynomials, since this
would help in calculations involving asymptotics of discrete Sobolev
polynomials \cite{MR2787289}.

The answer is affirmative, and we have obtained results for the Charlier and
Meixner polynomials. These are the only two infinite families of classical
orthogonal polynomials in the discrete lattice \{$0,1,2,\ldots\}.$

\section{Preliminaries}

Let $\psi\left(  t\right)  $ be a bounded, non-decreasing function on
$\mathbb{R},$ with finite \emph{moments}%
\begin{equation}
\mu_{n}=\int\limits_{\mathbb{R}}t^{n}d\psi\left(  t\right)  <\infty,\quad
n=0,1,\ldots\label{moments}%
\end{equation}
and assume that the set
\[
\mathfrak{S}\left(  \psi\right)  =\left\{  t\in\mathbb{R\ }|\ \psi\left(
t+\delta\right)  -\psi\left(  t-\delta\right)  >0\quad\forall\delta>0\right\}
\]
(called the \emph{spectrum} of $\psi)$ is infinite. Under these assumptions,
there exists a unique sequence of monic polynomials $\widehat{P}_{n}(x),$ with
$\deg\widehat{P}_{n}=n,$ such that%
\[
\int\limits_{\mathbb{R}}\widehat{P}_{n}(t)\widehat{P}_{m}\left(  t\right)
d\psi\left(  t\right)  =K_{n}\delta_{n,m},\quad K_{n}>0.
\]
The polynomials $\widehat{P}_{n}(x)$ satisfy a \emph{three-term recurrence
relation}%
\begin{equation}
x\widehat{P}_{n}=\widehat{P}_{n+1}+b_{n}\widehat{P}_{n}+c_{n}\widehat{P}%
_{n-1}, \label{3-term}%
\end{equation}
with initial conditions $\widehat{P}_{-1}\left(  x\right)  =0,\quad\widehat
{P}_{0}\left(  x\right)  =1.$

The inverse problem of finding a distribution function $\psi\left(  t\right)
$ satisfying (\ref{moments}) is called the \emph{Hamburger} \emph{moment
problem }\cite{MR0184042}, \cite{MR0008438}, \cite{MR1627806}. The moment
problem is called \emph{determinate} if there exists a unique solution, and
\emph{indeterminate} otherwise \cite{MR2242752}, \cite{MR1379118}. A possible
criterion for the determinacy of the problem is due to Carleman
\cite{MR986686}. Carleman's Theorem says that the problem is determinate if%
\begin{equation}%
{\displaystyle\sum\limits_{n=1}^{\infty}}
\frac{1}{\sqrt{c_{n}}}=\infty, \label{carleman}%
\end{equation}
where the coefficients $c_{n}>0$ were defined in (\ref{3-term}).

The \emph{associated orthogonal polynomials} $P_{n}^{\ast}\left(  x\right)  $
are defined by \cite{MR2006292}%
\[
xP_{n}^{\ast}=P_{n+1}^{\ast}+b_{n}P_{n}^{\ast}+c_{n}P_{n-1}^{\ast},\quad
P_{0}^{\ast}=0,\quad P_{1}^{\ast}=1.
\]
Note that $\deg P_{n}^{\ast}\left(  x\right)  =n-1.$ We have \cite{MR0481884}%
\[
\mu_{0}P_{n}^{\ast}\left(  x\right)  =\int\limits_{\mathbb{R}}\frac
{\widehat{P}_{n}\left(  x\right)  -\widehat{P}_{n}\left(  t\right)  }%
{x-t}d\psi\left(  t\right)  ,
\]
where $\mu_{0}$ was defined in (\ref{moments}). The associated classical
discrete orthogonal polynomials were studied in \cite{MR1386383},
\cite{MR1807872}, \cite{MR1995218}, \cite{MR968940}, \cite{MR1399900},
\cite{MR1455252}, and \cite{MR1985718}.

The connection between $\widehat{P}_{n}(x),P_{n}^{\ast}\left(  x\right)  $ and
the distribution function $\psi\left(  t\right)  $ is given by \emph{Markov's
theorem}:%
\begin{equation}
\underset{n\rightarrow\infty}{\lim}\mu_{0}\frac{P_{n}^{\ast}\left(  z\right)
}{\widehat{P}_{n}(z)}=\int\limits_{\mathbb{R}}\frac{d\psi\left(  t\right)
}{z-t},\quad z\notin\Lambda, \label{markov}%
\end{equation}
where $\Lambda=\left[  \inf\left(  \mathfrak{S}\right)  ,\sup\left(
\mathfrak{S}\right)  \right]  ,$ and the convergence is uniform in compact
subsets of $\mathbb{C}\backslash\Lambda.$ The original theorem was proved when
$\Lambda$ is a finite interval, but it is also true as long as the
corresponding Hamburger moment problem is determinate \cite{MR1285262},
\cite{MR1136928}.

The function
\[
S(z)=\int\limits_{\mathbb{R}}\frac{d\psi\left(  t\right)  }{z-t},\quad
z\notin\Lambda,
\]
is called the \emph{Stieltjes transform} of $\psi\left(  t\right)  .$ For the
class of discrete distributions that we are considering, we have the following result.

\begin{lemma}
The Stieltjes transform of the distribution
\[
\psi\left(  t\right)  =%
{\displaystyle\sum\limits_{k=0}^{\infty}}
\frac{\left(  a_{1}\right)  _{k}\cdots\left(  a_{p}\right)  _{k}}{\left(
b_{1}\right)  _{k}\cdots\left(  b_{q}\right)  _{k}}\frac{c^{k}}{k!}u\left(
t-k\right)  ,
\]
where $u(t)$ is the unit step function%
\[
u(t)=\left\{
\begin{array}
[c]{c}%
0,\quad t<0\\
1,\quad t\geq0
\end{array}
\right.  ,
\]
is given by%
\begin{equation}
S(z)=\frac{1}{z}\ _{p+1}F_{q+1}\left(
\begin{array}
[c]{c}%
-z,a_{1},\ldots,a_{p}\\
1-z,b_{1},\ldots,b_{q}%
\end{array}
;c\right)  ,\quad z\in\mathbb{C}\backslash\lbrack0,\infty). \label{stieltjes}%
\end{equation}

\end{lemma}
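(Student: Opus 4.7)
The plan is to evaluate the integral defining $S(z)$ directly and then recognize the resulting series as the claimed hypergeometric function. Because $\psi(t)$ is a pure jump function with jumps at the non-negative integers, the Stieltjes integral collapses to a sum:
\[
S(z)=\int\limits_{\mathbb{R}}\frac{d\psi(t)}{z-t}=\sum_{k=0}^{\infty}\frac{(a_{1})_{k}\cdots(a_{p})_{k}}{(b_{1})_{k}\cdots(b_{q})_{k}}\frac{c^{k}}{k!}\cdot\frac{1}{z-k}.
\]
So the whole proof reduces to showing that this series equals the right-hand side of (\ref{stieltjes}).

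The key algebraic identity I would isolate first is
\[
\frac{(-z)_{k}}{(1-z)_{k}}=\frac{z}{z-k}.
\]
This follows from telescoping: $(-z)_{k}=(-1)^{k}z(z-1)\cdots(z-k+1)$ and $(1-z)_{k}=(-1)^{k}(z-1)(z-2)\cdots(z-k)$, so the ratio collapses to $z/(z-k)$. Substituting this into the definition of ${}_{p+1}F_{q+1}$ gives
\[
\frac{1}{z}\sum_{k=0}^{\infty}\frac{(-z)_{k}(a_{1})_{k}\cdots(a_{p})_{k}}{(1-z)_{k}(b_{1})_{k}\cdots(b_{q})_{k}}\frac{c^{k}}{k!}=\sum_{k=0}^{\infty}\frac{(a_{1})_{k}\cdots(a_{p})_{k}}{(b_{1})_{k}\cdots(b_{q})_{k}}\frac{c^{k}}{k!}\cdot\frac{1}{z-k},
\]
which is exactly the series for $S(z)$ derived above.

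What remains is to justify the formal manipulations: I need the series to converge and the interchange of sum and integral to be legal. Absolute convergence of $\psi$ at $t=\infty$ (needed so that $\mu_{0}$ is finite) implies absolute convergence of the Stieltjes series for $z\notin[0,\infty)$, because $|z-k|$ is bounded below by the distance from $z$ to the non-negative reals, so term-by-term integration is valid by dominated convergence. The restriction $z\in\mathbb{C}\setminus[0,\infty)$ in the statement precisely avoids the poles at $z=k$ coming from the factor $(1-z)_{k}$ in the denominator, and matches the location of $\mathfrak{S}(\psi)\subset\{0,1,2,\ldots\}$.

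The only part I view as a possible obstacle is the convergence of the underlying series defining $\psi$ itself: for the measure to have finite mass one needs $p\le q$, or $p=q+1$ with $|c|<1$, or some similar restriction. I would state this as a standing hypothesis (implicit in demanding that $\mu_{0}<\infty$), and under it the Pochhammer identity together with the series rearrangement above immediately yields (\ref{stieltjes}).
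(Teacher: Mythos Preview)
Your argument is correct and follows essentially the same route as the paper: both proofs reduce the Stieltjes integral to the sum $\sum_{k\ge0}\frac{(a_1)_k\cdots(a_p)_k}{(b_1)_k\cdots(b_q)_k}\frac{c^k}{k!}\frac{1}{z-k}$ and then invoke the telescoping identity $\frac{(-z)_k}{(1-z)_k}=\frac{z}{z-k}$ to identify this with $\frac{1}{z}\,{}_{p+1}F_{q+1}$. Your discussion of convergence and of the implicit hypothesis $\mu_0<\infty$ is more thorough than what the paper records, but the substance is the same.
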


\begin{proof}
Since for all $z\in\mathbb{C}\backslash\lbrack0,\infty)$%
\[
\frac{\left(  -z\right)  _{k}}{\left(  1-z\right)  _{k}}=\prod\limits_{j=0}%
^{k-1}\frac{-z+j}{1-z+j}=\frac{z}{z-k},
\]
we have%
\[
S(z)=%
{\displaystyle\sum\limits_{k=0}^{\infty}}
\frac{\left(  a_{1}\right)  _{k}\cdots\left(  a_{p}\right)  _{k}}{\left(
b_{1}\right)  _{k}\cdots\left(  b_{q}\right)  _{k}}\frac{c^{k}}{k!}\frac
{1}{z-k}=\frac{1}{z}%
{\displaystyle\sum\limits_{k=0}^{\infty}}
\frac{\left(  a_{1}\right)  _{k}\cdots\left(  a_{p}\right)  _{k}}{\left(
b_{1}\right)  _{k}\cdots\left(  b_{q}\right)  _{k}}\frac{c^{k}}{k!}%
\frac{\left(  -z\right)  _{k}}{\left(  1-z\right)  _{k}},
\]
and the result follows.
\end{proof}

\begin{remark}
From the representation%
\[
S(z)=%
{\displaystyle\sum\limits_{k=0}^{\infty}}
\frac{\left(  a_{1}\right)  _{k}\cdots\left(  a_{p}\right)  _{k}}{\left(
b_{1}\right)  _{k}\cdots\left(  b_{q}\right)  _{k}}\frac{c^{k}}{k!}\frac
{1}{z-k},
\]
it is clear that $S(z)$ is a meromorphic function with simple poles at
$z=0,1,\ldots,$ and simple zeros between them. Since the reciprocal Gamma
function is an entire function with simple zeros at $z=0,-1,\ldots,$ we see
that $\frac{S(z)}{\Gamma\left(  -z\right)  }$ is an entire function with
infinitely many simple zeros located in the intervals $\left(  n,n+1\right)
,$ $n=0,1,\ldots.$
\end{remark}

The following result is known as \emph{Tannery's theorem} \cite{tannery}.
Although there are many proofs available in the literature \cite{Bromwich},
\cite{MR1903157}, \cite{MR0079110}, we include one for the sake of completeness.

\begin{theorem}
\label{Th1}\smallskip Suppose that we have%
\[
l_{k}\leq a_{k}\left(  n\right)  \leq u_{k},\quad0\leq k\leq n,
\]
that%
\[
\underset{n\rightarrow\infty}{\lim}a_{k}\left(  n\right)  =A_{k},\quad
k=0,1,\ldots,
\]
and that
\[%
{\displaystyle\sum\limits_{k=0}^{\infty}}
l_{k},\
{\displaystyle\sum\limits_{k=0}^{\infty}}
A_{k},\
{\displaystyle\sum\limits_{k=0}^{\infty}}
u_{k}%
\]
are all convergent series. Then,%
\[
\underset{n\rightarrow\infty}{\lim}%
{\displaystyle\sum\limits_{k=0}^{n}}
a_{k}\left(  n\right)  =%
{\displaystyle\sum\limits_{k=0}^{\infty}}
A_{k}.
\]

\end{theorem}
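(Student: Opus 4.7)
The plan is to treat this as a discrete analogue of the dominated convergence theorem: split the sum into a fixed finite head, where termwise convergence applies, and an infinite tail, which the summable bounds $l_k$ and $u_k$ will squeeze uniformly in $n$.

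First I would fix $\varepsilon>0$ and use the convergence of $\sum l_{k}$, $\sum A_{k}$, and $\sum u_{k}$ to choose $N$ large enough that each of the three tails
\[
\left|\sum_{k=N+1}^{\infty}l_{k}\right|,\quad\left|\sum_{k=N+1}^{\infty}A_{k}\right|,\quad\left|\sum_{k=N+1}^{\infty}u_{k}\right|
\]
is smaller than $\varepsilon/3$. Next, for this fixed $N$, the pointwise convergence $a_{k}(n)\to A_{k}$ on the finite set $\{0,1,\ldots,N\}$ gives an $n_{0}$ such that for all $n\geq n_{0}$,
\[
\left|\sum_{k=0}^{N}a_{k}(n)-\sum_{k=0}^{N}A_{k}\right|<\frac{\varepsilon}{3}.
\]

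Then I would write, for $n>N$,
\[
\sum_{k=0}^{n}a_{k}(n)-\sum_{k=0}^{\infty}A_{k}=\left[\sum_{k=0}^{N}a_{k}(n)-\sum_{k=0}^{N}A_{k}\right]+\sum_{k=N+1}^{n}a_{k}(n)-\sum_{k=N+1}^{\infty}A_{k},
\]
and estimate the last two terms using the hypothesis $l_{k}\leq a_{k}(n)\leq u_{k}$, which implies
\[
\sum_{k=N+1}^{n}l_{k}\leq\sum_{k=N+1}^{n}a_{k}(n)\leq\sum_{k=N+1}^{n}u_{k}.
\]
Letting $n$ be large, the outer sums are bounded (in absolute value) by the chosen tails of $\sum l_{k}$ and $\sum u_{k}$, hence by $\varepsilon/3$; the tail $\sum_{k=N+1}^{\infty}A_{k}$ was already controlled. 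Adding the three contributions yields $\bigl|\sum_{k=0}^{n}a_{k}(n)-\sum_{k=0}^{\infty}A_{k}\bigr|<\varepsilon$ for all sufficiently large $n$, which is the claim.

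There is no genuine obstacle here; the entire proof is bookkeeping with three $\varepsilon/3$ budgets. The one mildly subtle point worth flagging is that the two-sided bound $l_{k}\leq a_{k}(n)\leq u_{k}$ is essential: a single absolute-value dominant $|a_{k}(n)|\leq u_{k}$ would also work, but as stated the bounds on $l_{k}$ and $u_{k}$ need to be used simultaneously to squeeze the partial tail $\sum_{k=N+1}^{n}a_{k}(n)$ from both sides, since the $a_{k}(n)$ are not assumed to have a fixed sign.
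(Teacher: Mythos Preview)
Your argument is correct and is the same head--tail split as the paper's: control the finite head by termwise convergence and squeeze the tail between the partial sums of $l_{k}$ and $u_{k}$. The paper phrases it as a $\liminf/\limsup$ squeeze (send $n\to\infty$ first, then the cutoff $p\to\infty$), which incidentally sidesteps the one loose phrase in your write-up---bounding the \emph{partial} tail $\sum_{k=N+1}^{n}u_{k}$ by the \emph{full} tail $\sum_{k=N+1}^{\infty}u_{k}$ really requires the Cauchy criterion rather than just smallness of the full tail, but that is purely cosmetic.
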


\begin{proof}
Let $p<n$ be natural numbers and%
\[
x_{n}=%
{\displaystyle\sum\limits_{k=0}^{n}}
a_{k}\left(  n\right)  .
\]
Then,%
\[%
{\displaystyle\sum\limits_{k=0}^{p}}
a_{k}\left(  n\right)  +%
{\displaystyle\sum\limits_{k=p+1}^{n}}
l_{k}\leq x_{n}\leq%
{\displaystyle\sum\limits_{k=0}^{p}}
a_{k}\left(  n\right)  +%
{\displaystyle\sum\limits_{k=p+1}^{n}}
u_{k}.
\]
Letting $n\rightarrow\infty,$ we get%
\[%
{\displaystyle\sum\limits_{k=0}^{p}}
A_{k}+%
{\displaystyle\sum\limits_{k=p+1}^{\infty}}
l_{k}\leq\underset{n\rightarrow\infty}{\underline{\lim}}x_{n}\leq
\underset{n\rightarrow\infty}{\overline{\lim}}x_{n}\leq%
{\displaystyle\sum\limits_{k=0}^{p}}
A_{k}+%
{\displaystyle\sum\limits_{k=p+1}^{\infty}}
u_{k}.
\]
But since $%
{\displaystyle\sum\limits_{k=0}^{\infty}}
l_{k},\
{\displaystyle\sum\limits_{k=0}^{\infty}}
A_{k},\
{\displaystyle\sum\limits_{k=0}^{\infty}}
u_{k}$ converge, we can let $p\rightarrow\infty,$ and obtain%
\[%
{\displaystyle\sum\limits_{k=0}^{\infty}}
A_{k}\leq\underset{n\rightarrow\infty}{\underline{\lim}}x_{n}\leq
\underset{n\rightarrow\infty}{\overline{\lim}}x_{n}\leq%
{\displaystyle\sum\limits_{k=0}^{\infty}}
A_{k}.
\]
Thus,%
\[
\underset{n\rightarrow\infty}{\lim}x_{n}=%
{\displaystyle\sum\limits_{k=0}^{\infty}}
A_{k}.
\]

\end{proof}

\section{Charlier}

The \emph{Charlier polynomials} $C_{n}\left(  x;a\right)  $ are defined by
\cite{MR2656096}
\begin{equation}
C_{n}\left(  x;a\right)  =\ _{2}F_{0}\left(
\begin{array}
[c]{c}%
-n,-x\\
-
\end{array}
;-\frac{1}{a}\right)  , \label{charlier}%
\end{equation}
with $a>0,$ and the corresponding monic polynomials are%
\begin{equation}
\widehat{C}_{n}\left(  x;a\right)  =\left(  -a\right)  ^{n}\ C_{n}\left(
x;a\right)  . \label{monic charlier}%
\end{equation}
The Charlier polynomials are orthogonal with respect to the distribution%
\[
\psi\left(  t\right)  =%
{\displaystyle\sum\limits_{k=0}^{\infty}}
\frac{a^{k}}{k!}u\left(  t-k\right)  ,
\]
and satisfy%
\[%
{\displaystyle\sum\limits_{k=0}^{\infty}}
C_{n}\left(  k;a\right)  C_{m}\left(  k;a\right)  \frac{a^{k}}{k!}=a^{-n}%
e^{a}n!\delta_{n,m},
\]
where $\delta_{n,m}$ is Kronecker's delta%
\[
\delta_{n,m}=\left\{
\begin{array}
[c]{c}%
1,\quad n=m\\
0,\quad n\neq m
\end{array}
\right.  .
\]
Note that if we set $n=m=0$ we get%
\begin{equation}
\mu_{0}=%
{\displaystyle\sum\limits_{k=0}^{\infty}}
\frac{a^{k}}{k!}=e^{a}. \label{moment Charlier}%
\end{equation}

The Mehler-Heine type formula for the Charlier polynomials is the following.

\begin{proposition}
For all complex numbers $x,$we have%
\begin{equation}
\underset{n\rightarrow\infty}{\lim}\frac{a^{n}}{\Gamma\left(  n-x\right)
}C_{n}\left(  x;a\right)  =\frac{e^{a}}{\Gamma\left(  -x\right)  }.
\label{MH charlier}%
\end{equation}

\end{proposition}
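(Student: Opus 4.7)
The plan is to expand the defining ${}_2F_0$ representation (\ref{charlier}) of $C_n(x;a)$ as a finite sum, reorganize it so that each term stabilizes in $n$, and then exchange the limit and summation via Tannery's theorem (Theorem~\ref{Th1}).

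Using the identities $(-n)_k = (-1)^k n!/(n-k)!$ and $(-x)_k = \Gamma(k-x)/\Gamma(-x)$, together with the substitution $j = n-k$, a routine algebraic manipulation should recast (\ref{charlier}) as
$$\frac{a^n}{\Gamma(n-x)}\,C_n(x;a) = \frac{1}{\Gamma(-x)}\sum_{j=0}^{n} \frac{a^j}{j!}\,T_j(n),$$
where
$$T_j(n) := \frac{n!\,\Gamma(n-j-x)}{(n-j)!\,\Gamma(n-x)} = \prod_{\ell=1}^{j}\frac{n-\ell+1}{n-\ell-x}.$$
For each fixed $j$, every factor in this finite product tends to $1$ as $n\to\infty$, so $T_j(n)\to 1$. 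Applying Tannery's theorem would give the limit $\sum_{j\geq 0}a^j/j! = e^a$ for the inner sum, yielding (\ref{MH charlier}).

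The crux is supplying the dominating sequence Tannery requires: a bound $|T_j(n)| \leq M_j$ valid for all $n \geq j$, with $\sum_{j\geq 0} M_j\,|a|^j/j! < \infty$. The quotient $T_j(n)$ is well behaved when $n \gg j$ but may grow as $n$ approaches $j$, so the worst case $n = j$ must be controlled. There $T_j(j) = j!\,\Gamma(-x)/\Gamma(j-x)$, and Stirling's formula gives $|T_j(j)| = O(j^{\mathrm{Re}(x)+1})$; I would verify that an analogous polynomial bound in $j$ persists for every $n \geq j$, so that $M_j = C(x)\,j^{|\mathrm{Re}(x)|+1}$ works and the factorial $j!$ in the denominator secures summability.

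Finally, one must dispatch the degenerate case $x \in \mathbb{Z}_{\geq 0}$, where the identity $(-x)_{n-j} = \Gamma(n-j-x)/\Gamma(-x)$ fails. In that case $(-x)_k = 0$ for $k > x$ reduces $C_n(x;a)$ to a polynomial of degree $x$ in $n$, while $\Gamma(n-x)/a^n$ grows factorially, so the left-hand side of (\ref{MH charlier}) tends to $0$, matching $e^a/\Gamma(-x) = 0$.
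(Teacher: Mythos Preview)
Your algebraic rewriting and the appeal to Tannery's theorem match the paper exactly: the paper reaches the same sum, with your $T_j(n)$ written as $(n-k+1)_k/(n-k-x)_k$, and observes that each term tends to $1$. The divergence is entirely in how the dominating bound is supplied, and here your proposal has a gap. You aim to bound $|T_j(n)|$ directly for arbitrary complex $x\notin\mathbb{Z}_{\ge 0}$ by $C(x)\,j^{|\mathrm{Re}(x)|+1}$, but the sentence ``I would verify that an analogous polynomial bound in $j$ persists for every $n\ge j$'' is precisely the substantive step, and you have not carried it out. It can be done, but it is not routine: one must control $\prod_{\ell=1}^{j}(n-\ell+1)/|n-\ell-x|$ uniformly over the whole range $j\le n<\infty$, including the regime where $n-j$ is small and several factors may be large. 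Note also that Theorem~\ref{Th1} as stated is for real sequences with two-sided bounds $l_k\le a_k(n)\le u_k$; for complex $x$ you would need the absolute-value (dominated-convergence) form of Tannery, which is standard but not the version on hand.

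The paper avoids all of this by first restricting to real $x\le -1$. There each factor $(n-\ell+1)/(n-\ell-x)$ lies in $(0,1]$, so $0<T_j(n)\le 1$ is immediate and Theorem~\ref{Th1} applies verbatim with $l_k=0$ and $u_k=a^k/k!$. Having established the limit on the ray $x\le -1$, the paper observes that both sides of (\ref{MH charlier}) are entire in $x$ and invokes analytic continuation to extend to all of $\mathbb{C}$, with no separate treatment of nonnegative integers required. This route is considerably cleaner than the direct uniform bound you propose.
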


\begin{proof}
From the hypergeometric representation (\ref{charlier}), we get
\[
\frac{a^{n}}{\left(  -x\right)  _{n}}C_{n}\left(  x;a\right)  =a^{n}%
{\displaystyle\sum\limits_{j=0}^{n}}
\frac{\left(  -n\right)  _{j}\left(  -x\right)  _{j}}{\left(  1\right)
_{j}\left(  -x\right)  _{n}}\left(  -a\right)  ^{-j}.
\]
Changing the summation variable to $k=n-j,$ we have%
\begin{equation}
\frac{a^{n}}{\left(  -x\right)  _{n}}C_{n}\left(  x;a\right)  =a^{n}%
{\displaystyle\sum\limits_{k=0}^{n}}
\frac{\left(  -n\right)  _{n-k}\left(  -x\right)  _{n-k}}{\left(  1\right)
_{n-k}\left(  -x\right)  _{n}}\left(  -a\right)  ^{k-n}. \label{Charlier0}%
\end{equation}
Using the identity \cite[18:5:10]{MR2466333}
\begin{equation}
\left(  s\right)  _{l}=\left(  s\right)  _{m}\left(  s+m\right)  _{l-m},\quad
m=0,1,\ldots, \label{PochRatio}%
\end{equation}
with $s=-x,$ $l=n$ and $m=n-k,$ we get%
\[
\frac{\left(  -x\right)  _{n-k}}{\left(  -x\right)  _{n}}=\frac{1}{\left(
n-k-x\right)  _{k}}.
\]
From the formula \cite[18:5:1]{MR2466333}
\begin{equation}
\left(  -s\right)  _{l}=\left(  -1\right)  ^{l}\left(  s-l+1\right)  _{l},
\label{PochMinus}%
\end{equation}
with $s=n$ and $l=n-k,$ we have%
\[
\left(  -n\right)  _{n-k}=\left(  -1\right)  ^{n-k}\left(  k+1\right)
_{n-k}.
\]
Thus, we can rewrite (\ref{Charlier0}) as%
\begin{equation}
\frac{a^{n}}{\left(  -x\right)  _{n}}C_{n}\left(  x;a\right)  =%
{\displaystyle\sum\limits_{k=0}^{n}}
\frac{\left(  k+1\right)  _{n-k}}{\left(  1\right)  _{n-k}}\frac{a^{k}%
}{\left(  n-k-x\right)  _{k}}. \label{Charlier1}%
\end{equation}
Using the identity \cite[18:5:1]{MR2466333}%
\begin{equation}
\frac{\left(  s+m\right)  _{l}}{\left(  s\right)  _{l}}=\frac{\left(
s+l\right)  _{m}}{\left(  s\right)  _{m}},\quad m=0,1,\ldots\label{PochRatio1}%
\end{equation}
with $s=1,$ $l=n-k$ and $m=k$ in (\ref{Charlier1}), we obtain%
\begin{equation}
\frac{a^{n}}{\left(  -x\right)  _{n}}C_{n}\left(  x;a\right)  =%
{\displaystyle\sum\limits_{k=0}^{n}}
\frac{\left(  n-k+1\right)  _{k}}{\left(  1\right)  _{k}}\frac{a^{k}}{\left(
n-k-x\right)  _{k}}. \label{Charlier2}%
\end{equation}

But clearly, for all $0\leq k\leq n,$ with $x\leq-1,$
\[
0<\frac{\left(  n-k+1\right)  _{k}}{\left(  n-k-x\right)  _{k}}=%
{\displaystyle\prod\limits_{j=0}^{k-1}}
\frac{n-k+1+j}{n-k-x+j}\leq1,
\]
and for all $k=0,1,\ldots$%
\[
\underset{n\rightarrow\infty}{\lim}\frac{\left(  n-k+1\right)  _{k}}{\left(
n-k-x\right)  _{k}}=1,\quad x\leq-1.
\]
Therefore, from Tannery's theorem we conclude that%
\begin{equation}
\underset{n\rightarrow\infty}{\lim}\frac{a^{n}}{\left(  -x\right)  _{n}}%
C_{n}\left(  x;a\right)  =%
{\displaystyle\sum\limits_{k=0}^{\infty}}
\frac{a^{k}}{k!}=e^{a},\quad x\leq-1. \label{Charlier3}%
\end{equation}
Dividing both sides of (\ref{Charlier3}) by $\Gamma\left(  -x\right)  ,$ we
have%
\begin{equation}
\underset{n\rightarrow\infty}{\lim}\frac{a^{n}}{\Gamma\left(  n-x\right)
}C_{n}\left(  x;a\right)  =\frac{e^{a}}{\Gamma\left(  -x\right)  },\quad
x\leq-1. \label{Charlier4}%
\end{equation}
However, since both sides of the equation are analytic in the whole complex
plane, it follows from the principle of analytic continuation that the formula
is valid for all $x.$
\end{proof}

Other types of asymptotic approximations for $C_{n}\left(  x;a\right)  $ as
$n\rightarrow\infty$ were given in \cite{MR1297273}, \cite{MR2300274},
\cite{MR1857605}, \cite{MR1606887}, and \cite{MR2665812}.

\subsection{Associated polynomials}

The monic Charlier polynomials satisfy the three-term recurrence relation
\cite{MR2656096}%
\[
x\widehat{C}_{n}=\widehat{C}_{n+1}+\left(  n+a\right)  \widehat{C}%
_{n}+an\widehat{C}_{n-1},
\]
with initial conditions
\[
\widehat{C}_{-1}(x;a)=0,\quad\widehat{C}_{0}(x;a)=1.
\]
The associated polynomials $C_{n}^{\ast}\left(  x;a\right)  $ satisfy the same
recurrence, but the initial conditions are
\[
C_{0}^{\ast}(x;a)=0,\quad C_{1}^{\ast}(x;a)=1.
\]
Using Carleman's Theorem (\ref{carleman}), we see that the moment problem is
determinate. Hence, from (\ref{markov}) and (\ref{stieltjes}) we have%
\begin{equation}
\underset{n\rightarrow\infty}{\lim}e^{a}\frac{C_{n}^{\ast}\left(  z;a\right)
}{\widehat{C}_{n}\left(  z;a\right)  }=\frac{1}{z}\ _{1}F_{1}\left(
\begin{array}
[c]{c}%
-z\\
1-z
\end{array}
;a\right)  ,\quad z\in\mathbb{C}\backslash\lbrack0,\infty)
\label{markov charlier}%
\end{equation}
where we have used (\ref{moment Charlier}).

From (\ref{monic charlier}) and (\ref{MH charlier}), it follows that%
\begin{equation}
\underset{n\rightarrow\infty}{\lim}\frac{\left(  -1\right)  ^{n}}%
{\Gamma\left(  n-x\right)  }\widehat{C}_{n}\left(  x;a\right)  =\frac{e^{a}%
}{\Gamma\left(  -x\right)  }. \label{MH charlier 1}%
\end{equation}
Thus, from (\ref{markov charlier}) and (\ref{MH charlier 1}) we obtain%
\begin{equation}
\underset{n\rightarrow\infty}{\lim}\frac{\left(  -1\right)  ^{n}}%
{\Gamma\left(  n-x\right)  }C_{n}^{\ast}\left(  x;a\right)  =\frac{1}%
{x\Gamma\left(  -x\right)  }\ _{1}F_{1}\left(
\begin{array}
[c]{c}%
-x\\
1-x
\end{array}
;a\right)  . \label{MH charlier 2}%
\end{equation}

Using the formulas \cite[13.6.5, 8.2.6]{MR2723248}
\[
_{1}F_{1}\left(
\begin{array}
[c]{c}%
b\\
b+1
\end{array}
;-z\right)  =bz^{-b}\gamma\left(  b,z\right)  =b\Gamma\left(  b\right)
\gamma^{\ast}\left(  b,z\right)  ,
\]
we get%
\begin{equation}
\underset{n\rightarrow\infty}{\lim}\frac{\left(  -1\right)  ^{n}}%
{\Gamma\left(  n-x\right)  }C_{n}^{\ast}\left(  x;a\right)  =-\gamma^{\ast
}\left(  -x,-a\right)  , \label{MH charlier 3}%
\end{equation}
where $\gamma^{\ast}\left(  b,z\right)  $ is the \emph{entire incomplete gamma
function }defined by \cite[8.2.7]{MR2723248}
\[
\gamma^{\ast}\left(  b,z\right)  =\frac{1}{\Gamma\left(  b\right)  }%
{\displaystyle\int\limits_{0}^{1}}
t^{b-1}e^{-zt}dt,
\]
for $\operatorname{Re}\left(  b\right)  >0,$ and by analytic continuation
elsewhere. The function $\gamma^{\ast}\left(  b,z\right)  $ is entire in $b$
and $z,$ and has two zeros in each of the intervals $\left(  2n-2,2n\right)  $
for all $n=1,2,\ldots$ \cite{MR0326994}. It follows from (\ref{zeros}) that
the zeros of $C_{n}^{\ast}\left(  x;a\right)  $ approach the zeros of the
function $\gamma^{\ast}\left(  -x,-a\right)  $ as $n\rightarrow\infty.$

Using the formula \cite[45:6:4]{MR2466333}%
\[
\gamma^{\ast}\left(  b,z\right)  =e^{-z}\sum_{j=0}^{\infty}\frac{z^{j}}%
{\Gamma\left(  b+1+j\right)  },
\]
we can rewrite (\ref{MH charlier 2}) as%
\[
\underset{n\rightarrow\infty}{\lim}\frac{\left(  -1\right)  ^{n}}%
{\Gamma\left(  n-x\right)  }C_{n}^{\ast}\left(  x;a\right)  =-e^{a}%
{\displaystyle\sum\limits_{k=0}^{\infty}}
\frac{\left(  -a\right)  ^{k}}{\Gamma\left(  1-x+k\right)  }.
\]
In Figure 1, we plot the functions%
\[
\frac{1}{\Gamma\left(  28-x\right)  }C_{28}^{\ast}\left(  x;1.23\right)
,\quad-\gamma^{\ast}\left(  -x,-1.23\right)  ,
\]
to illustrate the accuracy of (\ref{MH charlier 3}).

\begin{figure}[ptb]
\begin{center}
{\resizebox{15cm}{!}{\includegraphics{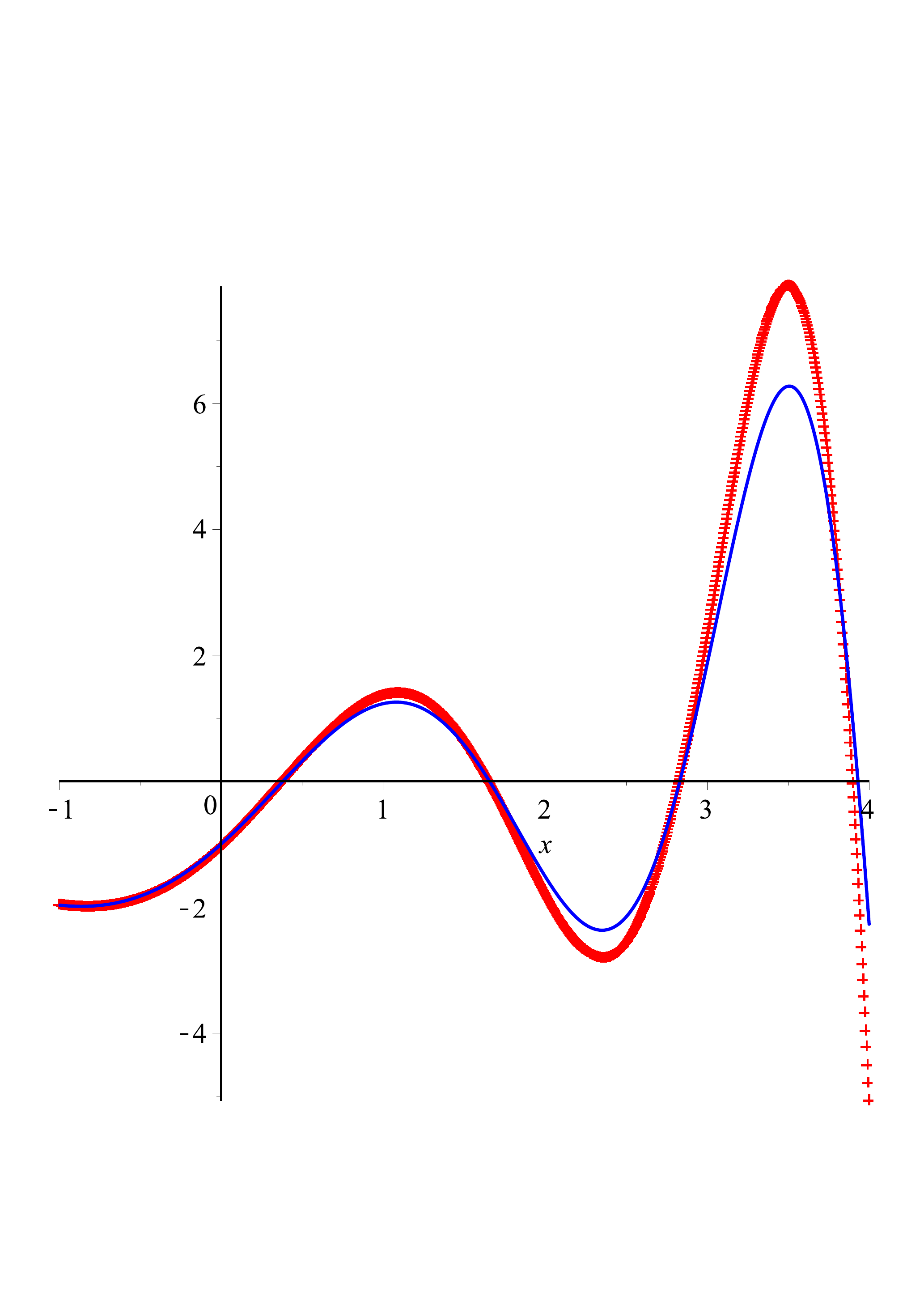}}}
\end{center}
\caption{A plot of the scaled polynomial $C_{28}^{\ast}$ (+++) and the
limiting function (solid line).}%
\end{figure}

\subsection{Meixner}

The \emph{Meixner polynomials} $M_{n}\left(  x;\beta,c\right)  $ are defined
by \cite{MR2656096}%
\begin{equation}
M_{n}\left(  x;\beta,c\right)  =\ _{2}F_{1}\left(
\begin{array}
[c]{c}%
-n,-x\\
\beta
\end{array}
;1-\frac{1}{c}\right)  , \label{Meixner}%
\end{equation}
where $\beta>0$ and $0<c<1.$ The monic polynomials are%
\begin{equation}
\widehat{M}_{n}\left(  x;\beta,c\right)  =\left(  \beta\right)  _{n}\left(
\frac{c}{c-1}\right)  ^{n}\ M_{n}\left(  x;\beta,c\right)  .
\label{monic meixner}%
\end{equation}
The Meixner polynomials are orthogonal with respect to the distribution%
\[
\psi\left(  t\right)  =%
{\displaystyle\sum\limits_{k=0}^{\infty}}
\left(  \beta\right)  _{k}\frac{c^{k}}{k!}u\left(  t-k\right)  ,
\]
and satisfy \cite{MR2656096}%
\[%
{\displaystyle\sum\limits_{k=0}^{\infty}}
M_{n}\left(  x;\beta,c\right)  M_{m}\left(  x;\beta,c\right)  \left(
\beta\right)  _{k}\frac{c^{k}}{k!}=\frac{c^{-n}n!}{\left(  \beta\right)
_{n}\left(  1-c\right)  ^{\beta}}\delta_{n,m}.
\]
In particular, for $n=m=0,$ we get%
\begin{equation}
\mu_{0}=%
{\displaystyle\sum\limits_{k=0}^{\infty}}
\left(  \beta\right)  _{k}\frac{c^{k}}{k!}=\left(  1-c\right)  ^{-\beta}.
\label{moment meixner}%
\end{equation}

The Mehler-Heine type formula for the Meixner polynomials is the following.

\begin{proposition}
For all complex numbers $x,$ we have%
\begin{equation}
\underset{n\rightarrow\infty}{\lim}\ \frac{c^{n}\ \left(  \beta\right)  _{n}%
}{\Gamma\left(  n-x\right)  }M_{n}\left(  x;\beta,c\right)  =\frac{1}{\left(
1-c\right)  ^{\beta+x}\Gamma\left(  -x\right)  }. \label{MH meixner}%
\end{equation}

\end{proposition}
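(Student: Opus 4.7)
The plan is to mirror the Charlier argument: first derive an explicit representation of $\frac{c^{n}(\beta)_{n}}{\Gamma(n-x)}M_{n}(x;\beta,c)$ as a finite sum whose summands possess clean pointwise limits, then apply Tannery's Theorem \ref{Th1} for $x$ lying in a half-line of the real axis, and finally extend to all complex $x$ by analytic continuation.

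The main obstacle is producing the right representation, because a naive reindexing $k=n-j$ of (\ref{Meixner}) in analogy with the Charlier proof only yields
\[
\frac{c^{n}(\beta)_{n}}{(-x)_{n}}M_{n}(x;\beta,c)=\sum_{k=0}^{n}\frac{(n-k+1)_{k}(\beta+n-k)_{k}}{k!\,(n-k-x)_{k}}\,c^{k}(1-c)^{n-k},
\]
whose individual terms vanish as $n\to\infty$ (since $0<1-c<1$), so Tannery's Theorem is inapplicable. To get around this, I would instead expand $(1-1/c)^{j}=\sum_{i=0}^{j}\binom{j}{i}(-1/c)^{i}$ inside (\ref{Meixner}), interchange the order of summation, and collapse the inner series via the Chu--Vandermonde identity
\[
{}_{2}F_{1}\!\left(\begin{array}{c}i-n,\,i-x\\ \beta+i\end{array};1\right)=\frac{(\beta+x)_{n-i}}{(\beta+i)_{n-i}}.
\]
Using $(\beta)_{i}(\beta+i)_{n-i}=(\beta)_{n}$ to eliminate the $(\beta)_{i}$ denominator, invoking (\ref{PochMinus}) to rewrite $(-1)^{i}(-n)_{i}=(n-i+1)_{i}$, reindexing $\ell=n-i$, and finally applying (\ref{PochRatio}) and (\ref{PochRatio1}) to handle the remaining Pochhammer ratios, one arrives at
\[
\frac{c^{n}(\beta)_{n}}{\Gamma(n-x)}M_{n}(x;\beta,c)=\frac{1}{\Gamma(-x)}\sum_{\ell=0}^{n}\frac{(n-\ell+1)_{\ell}}{(n-\ell-x)_{\ell}}\cdot\frac{(\beta+x)_{\ell}\,c^{\ell}}{\ell!}.
\]

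From here the argument is parallel to the Charlier case. For $x\le-1$ the ratio $(n-\ell+1)_{\ell}/(n-\ell-x)_{\ell}$ is a product of $\ell$ fractions in $(0,1]$ that tends to $1$ as $n\to\infty$ for each fixed $\ell$, so each summand is dominated in absolute value by $|(\beta+x)_{\ell}|\,c^{\ell}/\ell!$, the general term of an absolutely convergent series since $0<c<1$. Tannery's Theorem then yields
\[
\lim_{n\to\infty}\frac{c^{n}(\beta)_{n}}{\Gamma(n-x)}M_{n}(x;\beta,c)=\frac{1}{\Gamma(-x)}\sum_{\ell=0}^{\infty}\frac{(\beta+x)_{\ell}\,c^{\ell}}{\ell!}=\frac{1}{(1-c)^{\beta+x}\Gamma(-x)},\qquad x\le-1,
\]
and analytic continuation to all complex $x$ completes the proof.
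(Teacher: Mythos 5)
Your proposal is correct, and its overall architecture --- a finite-sum representation with summands that are dominated uniformly in $n$, followed by Tannery's theorem for $x\le -1$ and analytic continuation --- is exactly that of the paper. The one place where you genuinely diverge is in how you reach the key representation. The paper first applies the Pfaff-type transformation
\[
{}_{2}F_{1}\!\left(\begin{array}{c}-n,a\\ b\end{array};x\right)=\frac{(b-a)_{n}}{(b)_{n}}\,{}_{2}F_{1}\!\left(\begin{array}{c}-n,a\\ a+1-n-b\end{array};1-x\right)
\]
to convert the argument $1-\tfrac{1}{c}$ into $\tfrac{1}{c}$, and only then reverses the order of summation; this is precisely the device that avoids the dead end you correctly diagnose (the stray factor $(1-c)^{n-k}$ killing every fixed-$k$ term after a naive reversal). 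You instead expand $(1-1/c)^{j}$ binomially, interchange sums, and collapse the inner sum with Chu--Vandermonde; since the Pfaff transformation at a terminating $-n$ is itself usually proved by exactly this rearrangement, you have in effect re-derived the paper's quoted identity from scratch. The two routes land on the identical sum
\[
\frac{c^{n}(\beta)_{n}}{(-x)_{n}}M_{n}(x;\beta,c)=\sum_{\ell=0}^{n}\frac{(-x)_{n-\ell}}{(-x)_{n}}\frac{n!}{(n-\ell)!}\,(x+\beta)_{\ell}\frac{c^{\ell}}{\ell!},
\]
your Pochhammer-ratio form of the weight being equal to the paper's product $\prod_{j=0}^{\ell-1}\frac{n-j}{\,n-j-(x+1)\,}$. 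From there the two arguments coincide; if anything, your use of the absolute bound $|(\beta+x)_{\ell}|c^{\ell}/\ell!$ as the dominating series is slightly more careful than the paper's, which tacitly ignores that $(x+\beta)_{\ell}$ may change sign for small $\ell$ when $x\le-1$. What the paper's route buys is brevity (one citation replaces your double-sum manipulation); what yours buys is self-containedness and an explicit explanation of why the transformation is needed at all.
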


\begin{proof}
From (\ref{Meixner}) and the formula \cite[2.3.14]{MR1688958}
\[
\ _{2}F_{1}\left(
\begin{array}
[c]{c}%
-n,a\\
b
\end{array}
;x\right)  =\frac{\left(  b-a\right)  _{n}}{\left(  b\right)  _{n}}\ _{2}%
F_{1}\left(
\begin{array}
[c]{c}%
-n,a\\
a+1-n-b
\end{array}
;1-x\right)  ,
\]
we get%
\[
M_{n}\left(  x;\beta,c\right)  =\frac{\left(  x+\beta\right)  _{n}}{\left(
\beta\right)  _{n}}\ _{2}F_{1}\left(
\begin{array}
[c]{c}%
-n,-x\\
-x+1-n-\beta
\end{array}
;\frac{1}{c}\right)  .
\]
Thus,%
\[
\left(  \beta\right)  _{n}\ c^{n}\ M_{n}\left(  x;\beta,c\right)  =\left(
x+\beta\right)  _{n}\ c^{n}\ _{2}F_{1}\left(
\begin{array}
[c]{c}%
-n,-x\\
-x+1-n-\beta
\end{array}
;\frac{1}{c}\right)  .
\]
It follows that%
\[
\left(  \beta\right)  _{n}\ \left(  -x\right)  _{n}\ c^{n}\ M_{n}\left(
x;\beta,c\right)  =%
{\displaystyle\sum\limits_{k=0}^{n}}
\frac{\left(  -n\right)  _{k}\left(  -x\right)  _{k}\left(  x+\beta\right)
_{n}\ \left(  -x\right)  _{n}}{\left(  -x+1-n-\beta\right)  _{k}}\frac
{c^{n-k}}{k!},
\]
or%
\begin{equation}
\left(  \beta\right)  _{n}\ c^{n}\ M_{n}\left(  x;\beta,c\right)  =%
{\displaystyle\sum\limits_{k=0}^{n}}
\frac{\left(  -n\right)  _{n-k}\left(  -x\right)  _{n-k}\left(  x+\beta
\right)  _{n}\ }{\left(  -x+1-n-\beta\right)  _{n-k}}\frac{c^{k}}{\left(
n-k\right)  !}. \label{Meixner2}%
\end{equation}
Using (\ref{PochMinus}) and (\ref{PochRatio}) we have%
\[
\frac{\left(  -n\right)  _{n-k}}{\left(  -x+1-n-\beta\right)  _{n-k}}%
=\frac{\left(  k+1\right)  _{n-k}}{\left(  x+\beta+k\right)  _{n-k}}%
=\frac{\left(  1\right)  _{n}}{\left(  1\right)  _{k}}\frac{\left(
x+\beta\right)  _{k}}{\left(  x+\beta\right)  _{n}}.
\]
Hence, we can write (\ref{Meixner2}) in the form%
\begin{equation}
\left(  \beta\right)  _{n}\ c^{n}\ M_{n}\left(  x;\beta,c\right)  =%
{\displaystyle\sum\limits_{k=0}^{n}}
\frac{\left(  1\right)  _{n}}{\left(  1\right)  _{k}}\left(  x+\beta\right)
_{k}\left(  -x\right)  _{n-k}\frac{c^{k}}{\left(  n-k\right)  !},
\label{Meixner3}%
\end{equation}
and therefore%
\begin{equation}
\frac{\left(  \beta\right)  _{n}\ c^{n}\ \ M_{n}\left(  x;\beta,c\right)
}{\left(  -x\right)  _{n}}=%
{\displaystyle\sum\limits_{k=0}^{n}}
\frac{\left(  -x\right)  _{n-k}\ }{\left(  -x\right)  _{n}}\frac{n!}{\left(
n-k\right)  !}\left(  x+\beta\right)  _{k}\frac{c^{k}}{k!}. \label{Meixner4}%
\end{equation}

But since%
\[
\frac{\left(  -x\right)  _{n-k}\ }{\left(  -x\right)  _{n}}\frac{n!}{\left(
n-k\right)  !}=%
{\displaystyle\prod\limits_{j=0}^{k-1}}
\frac{n-j}{n-j-\left(  x+1\right)  }\leq1,\quad x\leq-1,
\]
and
\[
\underset{n\rightarrow\infty}{\lim}\frac{\left(  -x\right)  _{n-k}\ }{\left(
-x\right)  _{n}}\frac{n!}{\left(  n-k\right)  !}=1,\quad x\leq-1,
\]
we can use Tannery's theorem and conclude that%
\begin{equation}
\underset{n\rightarrow\infty}{\lim}\frac{\left(  \beta\right)  _{n}%
\ c^{n}\ \ M_{n}\left(  x;\beta,c\right)  }{\left(  -x\right)  _{n}}=%
{\displaystyle\sum\limits_{k=0}^{\infty}}
\left(  x+\beta\right)  _{k}\frac{c^{k}}{k!}=\left(  1-c\right)  ^{-x-\beta
},\quad x\leq-1. \label{Meixner5}%
\end{equation}
Dividing both sides of (\ref{Meixner5}) by $\Gamma\left(  -x\right)  ,$ we
have%
\[
\underset{n\rightarrow\infty}{\lim}\ \frac{c^{n}\ \left(  \beta\right)  _{n}%
}{\Gamma\left(  n-x\right)  }M_{n}\left(  x;\beta,c\right)  =\frac{1}{\left(
1-c\right)  ^{\beta+x}\Gamma\left(  -x\right)  },\quad x\leq-1.
\]
Since both sides of the equation are analytic in the whole complex plane, it
follows from the principle of analytic continuation that the formula is valid
for all $x.$
\end{proof}

Other asymptotic approximations for $M_{n}\left(  x;\beta,c\right)  $ as
$n\rightarrow\infty$ were studied in \cite{MR3184968}, \cite{MR1486393},
\cite{MR778685}, \cite{MR1217643}, \cite{MR2906191}, and \cite{MR1805998}.

\subsection{Associated polynomials}

The monic Meixner polynomials satisfy the three-term recurrence relation
\cite{MR2656096}%
\[
x\widehat{M}_{n}=\widehat{M}_{n+1}+\frac{n+\left(  n+\beta\right)  c}%
{1-c}\ \widehat{M}_{n}+\frac{n\left(  n+\beta-1\right)  c}{\left(  1-c\right)
^{2}}\ \widehat{M}_{n-1},
\]
with initial conditions
\[
\widehat{M}_{-1}\left(  x;\beta,c\right)  =0,\quad\widehat{M}_{0}\left(
x;\beta,c\right)  =1.
\]
The associated polynomials $M_{n}^{\ast}\left(  x;\beta,c\right)  $ satisfy
the same recurrence, but the initial conditions are
\[
M_{0}^{\ast}\left(  x;\beta,c\right)  =0,\quad M_{1}^{\ast}\left(
x;\beta,c\right)  =1.
\]
Using Carleman's Theorem (\ref{carleman}), we see that the moment problem is
determinate. Hence, from (\ref{markov}) and (\ref{stieltjes}) we have%
\begin{equation}
\underset{n\rightarrow\infty}{\lim}\left(  1-c\right)  ^{-\beta}\frac
{M_{n}^{\ast}\left(  z;\beta,c\right)  }{\widehat{M}_{n}\left(  z;\beta
,c\right)  }=\frac{1}{z}\ _{2}F_{1}\left(
\begin{array}
[c]{c}%
-z,\beta\\
1-z
\end{array}
;c\right)  ,\quad z\in\mathbb{C}\backslash\lbrack0,\infty),
\label{markov meixner}%
\end{equation}
where we have used (\ref{moment meixner}).

From (\ref{monic meixner}) and (\ref{MH meixner}), it follows that%
\begin{equation}
\underset{n\rightarrow\infty}{\lim}\ \frac{\left(  c-1\right)  ^{n}\ }%
{\Gamma\left(  n-x\right)  }\widehat{M}_{n}\left(  x;\beta,c\right)  =\frac
{1}{\left(  1-c\right)  ^{\beta+x}\Gamma\left(  -x\right)  }.
\label{MH meixner 1}%
\end{equation}
Thus, from (\ref{markov meixner}) and (\ref{MH meixner 1}) we obtain%
\begin{equation}
\underset{n\rightarrow\infty}{\lim}\frac{\left(  c-1\right)  ^{n}}%
{\Gamma\left(  n-x\right)  }M_{n}^{\ast}\left(  x;\beta,c\right)
=\frac{\left(  1-c\right)  ^{-x}}{x\Gamma\left(  -x\right)  }\ _{2}%
F_{1}\left(
\begin{array}
[c]{c}%
-x,\beta\\
1-x
\end{array}
;c\right)  . \label{MH meixner 2}%
\end{equation}

Using the formula \cite[8.17.7]{MR2723248}
\[
_{2}F_{1}\left(
\begin{array}
[c]{c}%
a,1-b\\
a+1
\end{array}
;z\right)  =az^{-a}B_{z}\left(  a,b\right)  ,
\]
we get%
\[
\underset{n\rightarrow\infty}{\lim}\frac{\left(  c-1\right)  ^{n}}%
{\Gamma\left(  n-x\right)  }M_{n}^{\ast}\left(  x;\beta,c\right)  =-\left(
\frac{c}{1-c}\right)  ^{x}\frac{1}{\Gamma\left(  -x\right)  }B_{c}\left(
-x,1-\beta\right)  ,
\]
where $B_{z}\left(  a,b\right)  $ is the \emph{incomplete Beta function
}defined by \cite[58:3:5]{MR2466333}
\[
B_{z}\left(  a,b\right)  =z^{a}%
{\displaystyle\int\limits_{0}^{1}}
t^{a-1}\left(  1-zt\right)  ^{b-1}dt,
\]
for $a,b>0,$ $z\in\left[  0,1\right]  ,$ and by analytic continuation
elsewhere. It follows from (\ref{zeros}) that the zeros of $M_{n}^{\ast
}\left(  x;\beta,c\right)  $ approach the zeros of the function $B_{c}\left(
-x,1-\beta\right)  $ as $n\rightarrow\infty.$

In Figure 2, we plot the functions%
\[
\frac{\left(  c-1\right)  ^{28}}{\Gamma\left(  28-x\right)  }M_{28}^{\ast
}\left(  x;1.23,0.36\right)  ,\quad\frac{\left(  1-0.36\right)  ^{-x}}%
{x\Gamma\left(  -x\right)  }\ _{2}F_{1}\left(
\begin{array}
[c]{c}%
-x,1.23\\
1-x
\end{array}
;0.36\right)  ,
\]
to illustrate the accuracy of (\ref{MH meixner 2}).

\begin{figure}[ptb]
\begin{center}
{\resizebox{15cm}{!}{\includegraphics{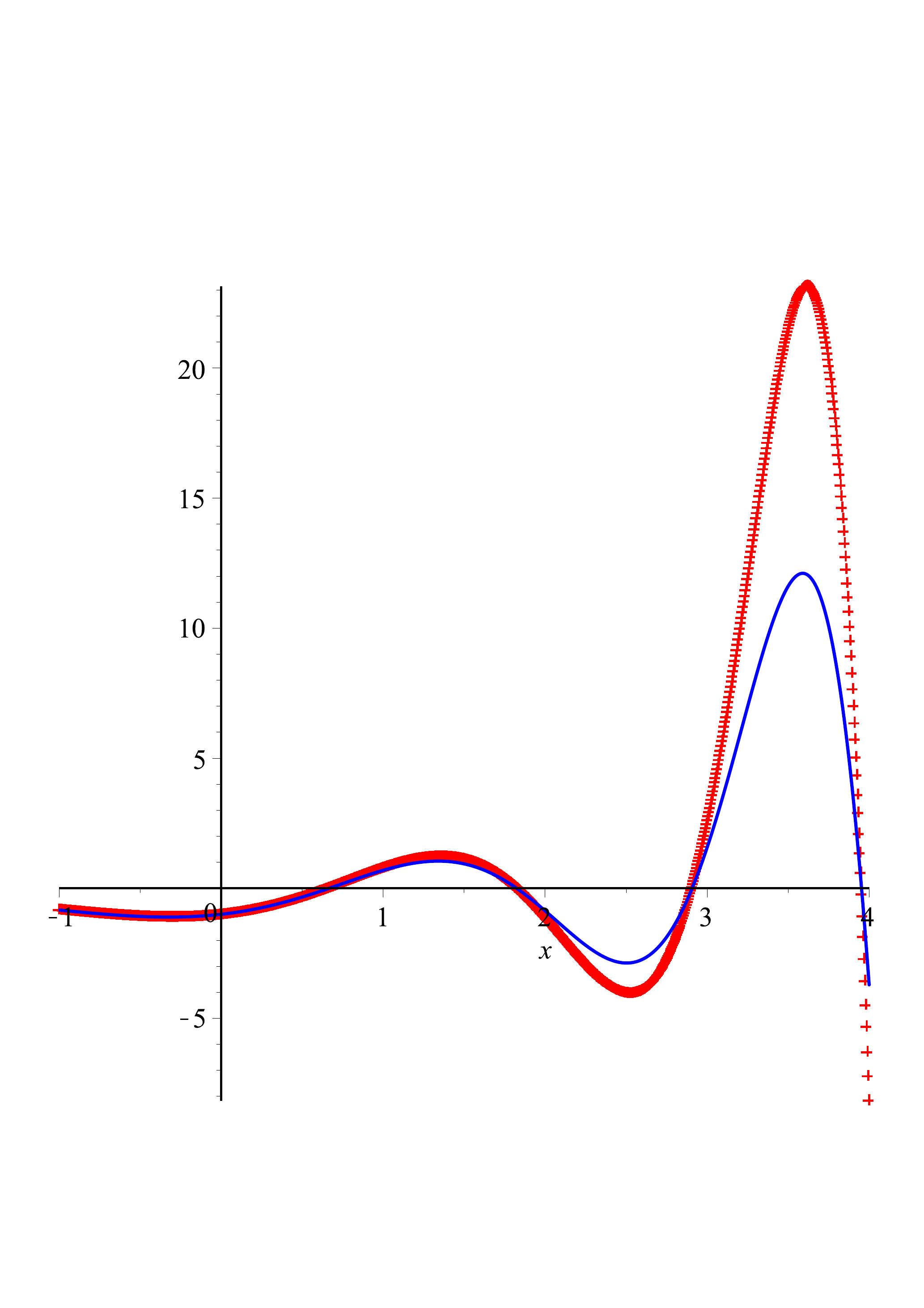}}}
\end{center}
\caption{A plot of the scaled polynomial $M_{28}^{\ast}$ (+++) and the
limiting function (solid line).}%
\end{figure}

\section{Conclusion}

We have derived Mehler-Heine type formulas for the Charlier and Meixner
families and their associated polynomials. We plan to extend this
investigation to include other discrete orthogonal polynomials of class one
\cite{discrete}.

\begin{acknowledgement}
We thank Professor Juan Jos\'{e} Moreno-Balc\'{a}zar for suggesting the
problem. This work was completed while visiting the Research Institute for
Symbolic Computation (RISC). We wish to thank Professor Peter Paule for being
the most gracious host during our stay.
\end{acknowledgement}

\end{document}